\numberwithin{equation}{section}
\def\<{\langle}
\def\>{\rangle}
\def\DD{{\mathcal D}}
\def\EE{{\mathcal E}}
\def\FF{{\mathcal F}}
\def\HH{{\mathcal H}}
\def\LL{{\mathcal L}}
\def\bbC{\mathbb{C}}
\def\bbD{\mathbb{D}}
\def\bbT{\mathbb{T}}
\newtheorem{lemma}{Lemma}[section]
\newtheorem{theorem}[lemma]{Theorem}
\newtheorem{corollary}[lemma]{Corollary}
\theoremstyle{definition}
\newtheorem{remark}[lemma]{Remark}
\newtheorem{example}[lemma]{Example}
\title{The invariant subspaces of $ S\oplus S^* $}
\author{Dan Timotin}
\address{Institute of Mathematics Simion Stoilow of the Romanian Academy, Calea Grivi\c tei 21, Bucharest 010702, Romania}
\email{dan.timotin@imar.ro}
\begin{document}

\begin{abstract}
	Using the tools of Sz.-Nagy--Foias theory of contractions, we describe in detail the invariant subspaces of the operator $ S\oplus S^* $, where $ S $ is the unilateral shift on a Hilbert space. This answers a question of C\^amara and Ross.
\end{abstract}

\keywords{Invariant subspaces, unilateral shift, dual truncated shift}

\subjclass[2010]{47A15, 47A45, 47B37}

\maketitle

\section{Introduction}

The recent series of papers~\cite{Ding1, Ding2, Ding3} explore the class of so-called \emph{dual truncated Toeplitz operators}, which act on a subspace of the usual Lebesgue space $ L^2 $ on the unit circle $ \bbT $. In the preprint~\cite{CR} C\^amara and Ross discuss the invariant subspaces of one of these operators, the dual of the compressed shift. In their investigation they encounter the problem of determining  the invariant subspaces of the operator $ S\oplus S^* $, where   $ S $ is the usual unilateral shift operator, acting as multiplication by the variable on the Hardy-Hilbert space $ H^2 $, and they state it as an open question.

It turns out that the answer can be given through   the Sz.Nagy--Foias theory of characteristic functions of contractions on a Hilbert space~\cite{NF}. That theory includes a general result about the relation between invariant subspaces of a contraction and regular factorizations of its characteristic function. In   particular, we may use it in order to obtain an explicit description of all invariant subspaces of $ S\oplus S^* $, giving thus  a complete answer to the open question in~\cite{CR}.

The plan of the paper is the following. After some preliminaries, in Section~\ref{se:general theory} we provide a short presentation of the relevant part of the  Sz.Nagy--Foias theory. Section~\ref{se:invariant subspaces} contains the main result, the description of the invariant subspaces. Section~\ref{se:example} provides an example related to~\cite{CR}, while Section~\ref{se:parametrization} details the most interesting class of invariant subspaces.

\section{Preliminaries}\label{se:prelim}
 
 We denote shortly  $ L^2=L^2(\bbT, dm) $, where $ m $ is Lebesgue measure on the unit circle $ \bbT $. Its subspace $ H^2 $ is the Hardy-Hilbert space of functions that can be analytically extended to the unit disk $ \bbD $; then $ H^2_-=L^2\ominus H^2 $. The orthogonal projections in $ L^2 $ onto $ H^2 $ and $ H^2_- $ will be denoted by $ P_+ $ and $ P_- $ respectively.
 The map $ f\mapsto \tilde f $, with $ \tilde{f}(z)=\overline{f(\bar z)} $ is an involution on $ H^2 $.
 An inner function $ \theta\in H^2 $ is characterized by $ |\theta(e^{it})|=1 $ for almost all~$ t $. If $ \theta $ is   inner function, 
  then $ \tilde{\theta} $ is also inner.
   
   We will also use Lebesgue and Hardy spaces defined on the unit circle with values in a Hilbert space $ \EE $; they will  be denoted with $ L^2(\EE) $ and $ H^2(\EE) $ respectively.  

If $ S $ is the shift operator on $ H^2 $, defined by $ (Sf)(z)=zf(z) $, Beurling's Theorem states that  the invariant subspaces of $ S $ are $ \{0\} $ and the spaces $ \theta H^2 $ with $ \theta  $ inner. We   denote $ K_\theta=H^2\ominus \theta H^2 $; so the invariant subspaces for $ S^* $ are   $ H^2 $ and $ K_\theta $ for $ \theta $ inner. The map $ C_\theta $ defined by $ C_\theta f=\theta\bar z\bar f $ is a conjugation on $ K_\theta $; in particular,
\begin{equation}\label{eq:conjugation}
C_\theta(K_\theta)=K_\theta.
\end{equation}

It will be convenient in the sequel to consider, rather than $ S^* $, the operator $ S_* $, acting on $ H^2_- $ as the compression of multiplication by $ z $ to $ H^2_- $. This is unitarily equivalent to $ S^* $, and the precise unitary operator that implements this equivalence is $ J:H^2\to H^2_- $, $ (Jf)(z)= \bar z f(\bar z) $; we have   $ JS^*=S_*J $. The invariant subspaces of $ S_* $ are then $ H^2_- $ together with $ J(K_\theta)=J(H^2)\ominus J(\theta H^2))
=H^2_-\ominus J(\theta H^2)) $ for $ \theta $ inner.  Since
\[
\begin{split}
J(\theta H^2)&=\{J(\theta f): f\in H^2  \} =
\{ \bar z \theta(\bar z) f(\bar z): f\in H^2  \}\\
&= \{ \overline{ \tilde \theta(  z)}\bar z \overline{  \tilde f( z)}: f\in H^2  \}=\bar z\overline{\tilde{\theta}}\, \overline{H^2},
\end{split}
\]
we have
\[
 J(K_\theta)=\bar z \overline{H^2}\ominus \bar z\overline{\tilde{\theta}} \overline{H^2}
 =\bar z\overline{K_{\tilde{\theta}}}.
\]

Our purpose in this paper will be the determination of the invariant subspaces of $ S\oplus S_* $; we will see below that this is a \emph{model operator} in the sense of Sz.Nagy and Foias. The invariant subspaces of $ S\oplus S^* $ are then immediately obtained by applying the operator $ J $.
 
Some of these invariant subspaces of $ S\oplus S_* $ may easily be described; namely, the subspaces $ X\oplus X' $, where $ X\subset H^2 $ is invariant to $ S $, while $ X'\subset H^2_- $ is invariant to $S_*$. We will call them \emph{splitting} invariant subspaces.  
The next lemma  summarizes the above remarks.

\begin{lemma}\label{le:splitting subspaces}
The splitting invariant subspaces of $ S\oplus S_* $ acting on $ H^2\oplus H^2_- $ are of the form	$ X\oplus X' $, where $ X $ is either $ \{0\} $ or $ \theta H^2 $ for some inner function $ \theta $, while $ X' $ is either $ H^2_- $ or $ \bar z\overline{K_{\theta'}} $ for some inner function $ \theta' $.
\end{lemma}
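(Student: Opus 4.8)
\emph{Proof proposal.} The plan is to unwind the definition of a splitting invariant subspace and apply Beurling's theorem to each factor separately. By definition, a splitting invariant subspace of $ S\oplus S_* $ has the form $ X\oplus X' $ with $ X\subseteq H^2 $ invariant for $ S $ and $ X'\subseteq H^2_- $ invariant for $ S_* $; conversely any such $ X\oplus X' $ is plainly invariant for $ S\oplus S_* $. Hence it suffices to list the $ S $-invariant subspaces of $ H^2 $ and the $ S_* $-invariant subspaces of $ H^2_- $.

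For the first factor this is immediate: Beurling's theorem, recalled in Section~\ref{se:prelim}, states that the $ S $-invariant subspaces of $ H^2 $ are exactly $ \{0\} $ and the spaces $ \theta H^2 $ with $ \theta $ inner (the full space $ H^2 $ being the case $ \theta\equiv 1 $). For the second factor I would use the unitary equivalence $ JS^*=S_*J $ from Section~\ref{se:prelim}: a subspace $ X'\subseteq H^2_- $ is $ S_* $-invariant precisely when $ J^{-1}X' $ is $ S^* $-invariant, that is, precisely when $ J^{-1}X' $ is the orthogonal complement in $ H^2 $ of an $ S $-invariant subspace. By Beurling again, the $ S^* $-invariant subspaces are $ H^2 $ and $ K_\theta=H^2\ominus\theta H^2 $ for $ \theta $ inner. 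Applying $ J $ and invoking the identities $ J(H^2)=H^2_- $ and $ J(K_\theta)=\bar z\overline{K_{\tilde\theta}} $ already computed in Section~\ref{se:prelim}, one obtains that the $ S_* $-invariant subspaces of $ H^2_- $ are $ H^2_- $ and $ \bar z\overline{K_{\tilde\theta}} $ with $ \theta $ inner. Since $ \theta\mapsto\tilde\theta $ is an involution of the set of inner functions, these are exactly the spaces $ \bar z\overline{K_{\theta'}} $ with $ \theta' $ inner. Taking the four combinations of the two lists yields the statement.

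There is no genuine obstacle in this argument; it is essentially a repackaging of Beurling's theorem together with the translation computations carried out in the preliminaries. The only point requiring a little care is the bookkeeping of the degenerate cases, so that the two families are stated without redundancy and still exhaust all possibilities: a constant inner function gives $ \theta H^2=H^2 $ in the first factor and $ \bar z\overline{K_{\theta'}}=\{0\} $ in the second, so that $ \{0\} $ in the second coordinate is subsumed under the stated form.
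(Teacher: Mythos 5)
Your proof is correct and follows essentially the same route as the paper, which states this lemma as a summary of the preliminary remarks: Beurling's theorem for the first factor and the conjugation by $J$ (with the computation $J(K_\theta)=\bar z\overline{K_{\tilde\theta}}$) for the second. Your extra care with the degenerate cases (constant inner functions absorbing $H^2$ and $\{0\}$) is consistent with how the paper lists the two families.
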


One may say that these are the obvious invariant subspaces of $ S\oplus S_* $. There is, however, a large variety of nonsplitting invariant subspaces, for whose determination we will have to bring into play the Sz.-Nagy--Foias theory of contractions~\cite{NF}. 

We end the preliminaries with a lemma that will be helpful.

\begin{lemma}\label{le:formula for projection on H2-}
	If $ \theta $ is inner, then $ P_-(\bar \theta H^2)= \bar z\overline{K_{\theta}}$.	
\end{lemma}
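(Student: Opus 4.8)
The plan is to compute $P_-(\bar\theta H^2)$ directly by describing the orthogonal complement $\overline{\bar\theta H^2}$ inside $L^2$, or more efficiently by relating the projection to already-established identities. First I would observe that $\bar\theta H^2 = \bar z \overline{\widetilde{\text{(something)}}}$ is not quite the shape we want, so instead I would work with the known decomposition $L^2 = \bar\theta H^2 \oplus \overline{\theta H^2}$... but wait, that is false; rather $\overline{\theta H^2}$ is orthogonal to $\theta H^2$ in $L^2$ but $\bar\theta H^2$ overlaps $H^2$. The cleaner route: note $\bar\theta H^2 = \bar\theta K_\theta \oplus \bar\theta \theta H^2 = \bar\theta K_\theta \oplus H^2$, since $\bar\theta(\theta H^2) = H^2$. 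Therefore $P_-(\bar\theta H^2) = P_-(\bar\theta K_\theta) + P_-(H^2) = P_-(\bar\theta K_\theta)$, because $H^2$ is killed by $P_-$.

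Next I would identify $\bar\theta K_\theta$. For $f \in K_\theta$, write $\bar\theta f$; since $f$ is orthogonal to $\theta H^2$, the function $\bar\theta f$ is orthogonal to $H^2$ (indeed $\langle \bar\theta f, g\rangle = \langle f, \theta g\rangle = 0$ for $g\in H^2$), so $\bar\theta f \in H^2_-$ already. Hence $\bar\theta K_\theta \subseteq H^2_-$ and $P_-(\bar\theta K_\theta) = \bar\theta K_\theta$. So it remains to show $\bar\theta K_\theta = \bar z \overline{K_\theta}$. For this I would use the conjugation $C_\theta$ from the preliminaries: $C_\theta f = \theta \bar z \bar f$, which maps $K_\theta$ onto $K_\theta$ by~\eqref{eq:conjugation}. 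Thus $\overline{K_\theta} = \overline{C_\theta(K_\theta)} = \{\overline{\theta \bar z \bar g} : g \in K_\theta\} = \{\bar\theta z g : g\in K_\theta\} = z\bar\theta K_\theta$. Multiplying by $\bar z$ gives $\bar z\overline{K_\theta} = \bar\theta K_\theta$, which closes the chain.

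I expect no serious obstacle here; the only point requiring a little care is keeping track of bars and the $\bar z$ factors when applying the conjugation identity, and making sure that $\bar\theta K_\theta$ really lands inside $H^2_-$ so that the projection acts trivially on it (equivalently, that $\bar\theta K_\theta \perp H^2$), which is the computation $\langle \bar\theta f, g\rangle = \langle f, \theta g\rangle = 0$ valid precisely because $f \perp \theta H^2$. Assembling the three observations — the splitting $\bar\theta H^2 = H^2 \oplus \bar\theta K_\theta$, the containment $\bar\theta K_\theta \subseteq H^2_-$, and the conjugation identity $\bar\theta K_\theta = \bar z\overline{K_\theta}$ — yields $P_-(\bar\theta H^2) = \bar z\overline{K_\theta}$ as claimed.
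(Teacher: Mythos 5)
Your argument is correct and follows essentially the same route as the paper: the decomposition $\bar\theta H^2=\bar\theta K_\theta\oplus H^2$, the observation that $P_-$ kills the $H^2$ summand and fixes $\bar\theta K_\theta$, and the identification $\bar\theta K_\theta=\bar z\overline{K_\theta}$ via the conjugation $C_\theta$ from~\eqref{eq:conjugation}. You simply spell out the intermediate verifications (that $\bar\theta K_\theta\perp H^2$ and the bar/$\bar z$ bookkeeping) that the paper leaves implicit.
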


\begin{proof}
	The decomposition $ H^2=K_\theta\oplus \theta H^2 $ yields $ \bar\theta H^2=\bar\theta K_\theta\oplus H^2 $, and so 
	$ P_-(\bar \theta H^2)=\bar\theta K_\theta $. But $\bar\theta K_\theta=\bar z\overline{K_{\theta}}  $ follows from equality~\eqref{eq:conjugation}.
\end{proof}

\section{Sz.Nagy--Foias theory of contractions and invariant subspaces}\label{se:general theory}

The general reference for   this section is the monograph~\cite{NF}.
Suppose   $ \Theta:\bbD\to  \LL(\EE, \EE^*) $ is an analytic function in the unit disc $\bbD $ with values in  the algebra of bounded operators from $ \EE $ to $ \EE^* $, with $ \|\Theta(z) \|\le 1$ for all $ z\in\bbD $; we will call it a \emph{contractive analytic function}. $ \Theta $ has boundary values almost everywhere on $ \bbT $, that  will   be denoted by $ \Theta(e^{it}) $. A contractive analytic function is called \emph{pure} if $\| \Theta(0)x\|<\|x\| $ for any $x\in \EE$.
Any contractive analytic function admits a decomposition in a direct sum $\Theta=\Theta_p\oplus \Theta_u   $, where $\Theta_p$ is pure and $\Theta_u$ is a constant unitary operator; then $\Theta_p$ is called the \emph{pure part} of $\Theta$.

To a pure contractive analytic function corresponds a functional model, defined as follows. Denote $ \Delta(e^{it})=(I-\Theta(e^{it})^*\Theta(e^{it}))^{1/2} $. Then the \emph{model space} is
\begin{equation}\label{eq:model space}
\HH_\Theta= (H^2(\EE_*)\oplus \overline{\Delta L^2(\EE)})\ominus \{ \Theta f\oplus \Delta f: f\in H^2(\EE) \},
\end{equation}
on which acts the \emph{model operator} $ \bm{S}_\Theta $, defined as the compression to $ \HH_\Theta $ of multiplication with $ e^{it} $ on both components of $ H^2(\EE_*)\oplus \overline{\Delta L^2(\EE)} $.

If $ \HH $ is a Hilbert space, a completely nonunitary contraction $ T\in \LL(\HH) $ is a linear operator that satisfies $ \|T\|\le 1 $, and there is no reducing subspace of $ T $ on which it is unitary. The defect of $T$ is the operator $ D_T=(I-T^*T)^{1/2} $, and the defect space is $\DD_T=\overline{D_T\HH} $.
It is shown in~\cite{NF} that any completely nonunitary contraction $ T $ is unitarily equivalent to $ \bm{S}_{\Theta_T} $, where $ \Theta_T $ is the pure contractive analytic function with values in $ \LL(\DD_T, \DD_{T^*}) $ defined by 
\[
\Theta_T(z)=-T+D_{T^*}(I-zT^*)^{-1}D_T|\DD_T.
\]
Note that the domain of $ \Theta_T(z) $ is $ \DD_T $.

The invariant subspaces of $\bm{S}_\Theta  $ are in correspondence with the \emph{regular} factorizations of $ \Theta $, that we will define in the sequel. Suppose $\FF  $ is a third Hilbert space and $ \Theta_1:\bbD\to \LL(\EE, \FF) $, $ \Theta_2:\bbD\to \LL(\FF, \EE_*) $ are two other contractive analytic functions such that $ \Theta=\Theta_2\Theta_1 $. If $ \Delta(e^{it})=(I-\Theta(e^{it})^*\Theta(e^{it}))^{1/2} $ for $ i=1,2 $, then the map 
\[
\Delta f\mapsto \Delta_2\Theta_1 f\oplus \Delta_1f, \quad f\in \Delta L^2(\EE)
\]
is isometric, and may thus be completed to an isometry 
\[
Z:\overline{\Delta L^2(\EE)}\to \overline{\Delta_2 L^2(\FF)}\oplus \overline{\Delta_1 L^2(\EE)}.
\]
The factorization $ \Theta=\Theta_2\Theta_1 $ is called \emph{regular} if $ Z $ is unitary.

The relation between invariant subspaces and regular factorization is summed up in the next theorem, which follows from~\cite[Theorem VII.1.1]{NF},~\cite[Theorem VII.4.3]{NF}, and the remark following it. 

\begin{theorem}\label{th:general factorization result}
	To any regular factorization $ \Theta=\Theta_2\Theta_1 $ corresponds an invariant subspace of $ \bm{S}_\Theta $, defined by the formula
	\begin{equation}\label{eq:general invariant subspace}
	\begin{split}
	Y=&\{ \Theta_2 u \oplus Z^{-1}(\Delta_2 u \oplus v) : u\in H^2(\FF), v\in  \overline{\Delta L^2(\EE)}\} \\
	&\qquad\ominus \{\Theta w\oplus \Delta w: w\in H^2(\EE) \}.
	\end{split}
	\end{equation}
The characteristic function of $\bm S_\Theta$ is the pure part of $\Theta_1$.

	Conversely, any invariant subspace determines a regular factorization $ \Theta=\Theta_2\Theta_1 $, such that $Y$ is given by~\eqref{eq:general invariant subspace}. 
	
	If $ \Theta=\Theta_2'\Theta_1' $, with $ \Theta_1:\bbD\to \LL(\EE, \FF') $, $ \Theta_2:\bbD\to \LL(\FF', \EE_*) $ produces through~\eqref{eq:general invariant subspace} the same subspace $ H_1 $, then there exists $ \Omega\in\LL(\FF,\FF') $ unitary, such that $ \Theta_1'=\Omega \Theta_1 $, $ \Theta_2'=\Theta_2\Omega^* $. 
\end{theorem}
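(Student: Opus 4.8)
The plan is to deduce the three assertions from the corresponding results of Sz.-Nagy--Foias, reconciling their notation with the one used here. I would begin with the converse direction, since the triangulation attached to an invariant subspace is the natural starting point. Given an invariant subspace $Y$ of $\bm{S}_\Theta$, put $T_1=\bm{S}_\Theta|Y$ and let $T_2$ be the compression of $\bm{S}_\Theta$ to $\HH_\Theta\ominus Y$, so that with respect to $\HH_\Theta=Y\oplus(\HH_\Theta\ominus Y)$ the operator $\bm{S}_\Theta$ is upper triangular with diagonal entries $T_1$ and $T_2$. By~\cite[Theorem VII.1.1]{NF}, this triangulation produces a factorization $\Theta=\Theta_2\Theta_1$, with $\Theta_1:\bbD\to\LL(\EE,\FF)$, $\Theta_2:\bbD\to\LL(\FF,\EE_*)$, in which $\Theta_1$ agrees with the characteristic function of $T_1$ up to a constant left unitary factor and $\Theta_2$ agrees with that of $T_2$ up to a constant right unitary factor; moreover this factorization is regular, and the pure part of $\Theta_1$ is exactly the characteristic function of the restriction $\bm{S}_\Theta|Y$. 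This settles the converse statement together with the assertion on characteristic functions.

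For the direct statement I would start from a regular factorization $\Theta=\Theta_2\Theta_1$ and run the correspondence of~\cite[Theorem VII.1.1]{NF} in the reverse direction to obtain an invariant subspace; its explicit realization inside the model space $\HH_\Theta$ is then the content of~\cite[Theorem VII.4.3]{NF}. Here the isometry $Z$ enters in an essential way: regularity means precisely that $Z$ is unitary, so $Z^{-1}$ is defined on the whole of $\overline{\Delta_2 L^2(\FF)}\oplus\overline{\Delta_1 L^2(\EE)}$, which is what makes the expression $Z^{-1}(\Delta_2 u\oplus v)$ in~\eqref{eq:general invariant subspace} meaningful. The remaining work is to transcribe the description of~\cite[Theorem VII.4.3]{NF} into the present notation, checking that the component in $\overline{\Delta L^2(\EE)}$, written there via $Z^{-1}$ over $u\in H^2(\FF)$ and $v\in\overline{\Delta L^2(\EE)}$, matches~\eqref{eq:general invariant subspace} once the copy of $\{\Theta w\oplus\Delta w:w\in H^2(\EE)\}$ defining $\HH_\Theta$ is subtracted.

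Finally, the uniqueness assertion is the statement that the regular factorization attached to $Y$ is unique up to unitary equivalence of the intermediate space, which is exactly the remark following~\cite[Theorem VII.4.3]{NF}; alternatively it can be read off from the construction above, since both $\FF$ and $\FF'$ are then canonically identified with the defect space $\DD_{T_1}$ of $T_1=\bm{S}_\Theta|Y$, and this identification furnishes the unitary $\Omega\in\LL(\FF,\FF')$ with $\Theta_1'=\Omega\Theta_1$ and $\Theta_2'=\Theta_2\Omega^*$. I expect the main obstacle to be not conceptual but organizational: one must keep careful track of the distinction between a factor $\Theta_i$ and its pure part (the factors of a regular factorization need not themselves be pure), and deal with the possibility that $T_1$ or $T_2$ fails to be completely non-unitary, which is precisely where the constant unitary summands and the passage to pure parts have to be invoked.
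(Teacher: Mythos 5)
Your proposal is correct and follows essentially the same route as the paper, which proves this theorem simply by invoking \cite[Theorem VII.1.1]{NF}, \cite[Theorem VII.4.3]{NF}, and the remark following the latter; you merely spell out the triangulation and the bookkeeping about pure parts in more detail. Note only that the phrase ``the characteristic function of $\bm S_\Theta$'' in the statement should be read, as you correctly do, as the characteristic function of the restriction $\bm S_\Theta|Y$.
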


In general, the main difficulty in the application of Theorem~\ref{th:general factorization result} is the identification of the regular factorizations of a given contractive analytic function. Fortunately, this can be done explicitely in the case that interests us.

Let us denote by $0_{m\to n}$ the zero contractive analytic function considered as acting from $\bbC^{m}$ to $\bbC^{n}$.
The functional model associated to   $\Theta=0_{1\to 1}$ is the space
\[
(H^2\oplus L^2)\ominus \{ 0\oplus f: f\in H^2\}=H^2\oplus H^2_-,
\]
 on which the model operator is precisely $ S\oplus S_* $.  
 Noting that $ \Delta(e^{it})=1 $ for all $ t $, we obtain
  \begin{equation*}
 \begin{split}
 Y&=\{ \Theta_2u\oplus Z^{-1}(\Delta_2 u\oplus v): u\in H^2(\FF), v\in \overline{\Delta_1L^2}\}\ominus (\{0\}\oplus H^2)\\
 &=P_{H^2\oplus H^2_-}( \{ \Theta_2u\oplus Z^{-1}(\Delta_2 u\oplus v): u\in H^2(\FF), v\in \overline{\Delta_1L^2}\} )\\
 &=\{ \Theta_2u\oplus P_-(Z^{-1}(\Delta_2 u\oplus v)): u\in H^2(\FF), v\in \overline{\Delta_1L^2}\},
 \end{split}
 \end{equation*}

Theorem~\ref{th:general factorization result} yields then the next corollary.

 \begin{corollary}\label{co:general} {\rm (i)}
 	To any regular factorization $ 0_{1\to 1}=\Theta_2\Theta_1 $,  where $ \Theta_1:\bbD\to\LL(\bbC,\FF) $, $\Theta_2:\bbD\to\LL(\FF,\bbC)$, corresponds an invariant subspace of $ \bm{S}_\Theta $, defined by the formula
 	  \begin{equation}\label{eq:invariant subspace}
 Y=\{ \Theta_2u\oplus P_-(Z^{-1}(\Delta_2 u\oplus v)): u\in H^2(\FF), v\in \overline{\Delta_1L^2}\},
 	\end{equation}
 	where 
\begin{equation}\label{eq:Z}
Z:L^2\to \overline{\Delta_2 L^2(\FF)}\oplus\overline{\Delta_1 L^2}, \quad
Zv=\Delta_2\Theta_1 v\oplus \Delta_1 v.
\end{equation}
The characteristic function of $T_Y:= S\oplus S_*|Y$ is the pure part of $\Theta_1$.

{\rm (ii)} 	Conversely, any invariant subspace determines a regular factorization $ 0_{1\to 1} =\Theta_2\Theta_1 $. 

{\rm (iii)}
If another factorization $ 0=\Theta_2'\Theta_1' $, with $ \Theta_1:\bbD\to\LL(\bbC,\FF') $, $\Theta_2:\bbD\to\LL(\FF',\bbC)$, produces by~\eqref{eq:invariant subspace} the same invariant subspace $ Y $, then there exists $ \Omega\in\LL(\FF, \FF') $ unitary, such that $ \Theta_1'=\Omega \Theta_1 $, $ \Theta_2'=\Theta_2\Omega^* $. 

 \end{corollary}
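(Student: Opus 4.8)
The plan is to derive Corollary~\ref{co:general} essentially as a direct specialization of Theorem~\ref{th:general factorization result} to the case $ \Theta = 0_{1\to 1} $, so the ``proof'' is mostly a matter of unwinding the model-space formulas that were already set up in the running text just above the corollary. First I would recall from that computation that for $ \Theta = 0_{1\to 1} $ one has $ \EE = \EE_* = \bbC $, $ \Delta(e^{it}) \equiv 1 $, so the model space $ \HH_\Theta $ is exactly $ H^2 \oplus H^2_- $ and the model operator $ \bm S_\Theta $ is $ S \oplus S_* $; this identifies the ambient operator whose invariant subspaces we are parametrizing. A regular factorization of $ 0_{1\to 1} $ through a Hilbert space $ \FF $ is a pair $ \Theta_1 : \bbD \to \LL(\bbC,\FF) $, $ \Theta_2 : \bbD \to \LL(\FF,\bbC) $ of contractive analytic functions with $ \Theta_2\Theta_1 = 0 $ and with the associated isometry $ Z $ of~\eqref{eq:Z} unitary; since $ \Delta \equiv 1 $, the domain space $ \overline{\Delta L^2(\EE)} $ is just $ L^2 $, which is why $ Z $ takes the form displayed in~\eqref{eq:Z}.

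Next, for part~(i), I would substitute these data into the general formula~\eqref{eq:general invariant subspace}. The subtracted subspace $ \{\Theta w \oplus \Delta w : w \in H^2(\EE)\} $ becomes $ \{0\} \oplus H^2 \subset H^2 \oplus L^2 $, and orthogonally removing it from a subspace of $ H^2 \oplus L^2 $ is the same as applying the orthogonal projection $ P_{H^2 \oplus H^2_-} = I \oplus P_- $ --- this is precisely the chain of equalities already written out in the excerpt, the only nontrivial point being that the first component $ \Theta_2 u $ already lies in $ H^2 $ so it is untouched by the projection, while $ P_- $ acts on the second component $ Z^{-1}(\Delta_2 u \oplus v) $. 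That gives formula~\eqref{eq:invariant subspace}. The statement about the characteristic function of $ T_Y := S \oplus S_* | Y $ being the pure part of $ \Theta_1 $ is literally the corresponding sentence in Theorem~\ref{th:general factorization result} transcribed to this setting.

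For parts~(ii) and~(iii) there is really nothing new to prove: part~(ii) is the ``Conversely'' clause of Theorem~\ref{th:general factorization result} applied to $ \Theta = 0_{1\to 1} $, asserting that every $ S \oplus S_* $-invariant subspace arises from some regular factorization $ 0_{1\to 1} = \Theta_2\Theta_1 $ via~\eqref{eq:general invariant subspace}, hence via~\eqref{eq:invariant subspace} after the projection identity of part~(i); and part~(iii) is the uniqueness-up-to-unitary clause of that theorem, with $ \FF, \FF' $ in place of the abstract intermediate spaces and the relations $ \Theta_1' = \Omega\Theta_1 $, $ \Theta_2' = \Theta_2\Omega^* $ read off verbatim. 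I would therefore present the proof as: ``This is Theorem~\ref{th:general factorization result} specialized to $ \Theta = 0_{1\to 1} $, using the identification of $ \HH_\Theta $ with $ H^2 \oplus H^2_- $ and the simplification of~\eqref{eq:general invariant subspace} carried out above.''

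The only point requiring a word of care --- and the closest thing here to an ``obstacle'' --- is making sure the passage from $ \ominus(\{0\}\oplus H^2) $ to $ P_{H^2\oplus H^2_-} = I\oplus P_- $ is justified, i.e.\ that the set $ \{\Theta_2 u \oplus Z^{-1}(\Delta_2 u \oplus v)\} $ before subtraction is a (closed) subspace of $ H^2 \oplus L^2 $ containing $ \{0\}\oplus H^2 $, so that taking the orthogonal complement of that piece coincides with orthogonally projecting each element; this is exactly the content of the three-line display preceding the corollary, so in the write-up I would simply point back to it rather than re-derive it. Beyond that, everything is bookkeeping: matching $ \EE = \EE_* = \bbC $, $ \Delta \equiv 1 $, $ \overline{\Delta_2 L^2(\FF)}\oplus \overline{\Delta_1 L^2} $ as the codomain of $ Z $, and copying the three conclusions of Theorem~\ref{th:general factorization result} into the present notation.
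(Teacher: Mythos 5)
Your proposal matches the paper's own derivation: the corollary is obtained exactly by specializing Theorem~\ref{th:general factorization result} to $\Theta=0_{1\to 1}$, identifying $\HH_\Theta$ with $H^2\oplus H^2_-$, and converting the $\ominus(\{0\}\oplus H^2)$ into the projection $I\oplus P_-$ via the displayed chain of equalities preceding the corollary. The one point you flag for care (that the subtracted subspace is contained in the larger one, so subtraction equals projection) is indeed the only nontrivial step, and it holds since $0\oplus w=\Theta_2(\Theta_1 w)\oplus Z^{-1}(\Delta_2\Theta_1 w\oplus\Delta_1 w)$ for $w\in H^2$.
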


 In order to obtain a concrete description of the invariant subspaces in Corollary~\ref{co:general}, we have to know 
 the regular factorizations of the function $ 0_{1\to 1} $. This can be obtained from another result of Sz.-Nagy and Foias, namely~\cite[Proposition VII.3.5]{NF}, which describes all regular factorizations of a scalar contractive analytic function. Applying it to the null function yields the next statement.

 \begin{lemma}\label{le:description of factorizations}
 The regular factorizations of the function $\Theta=0_{1\to 1}$ are of the following three types, corresponding to $ \dim\FF=0,1 $ or $ 2 $:
 \begin{enumerate}
 	\item $ \dim\FF=0 $. There is a unique possibility: $0_{1\to 1}=0_{0\to 1}0_{1\to 0}$.
 	
 	\item $ \dim\FF=1 $. Then $0=\Theta_2\Theta_1$ with $ \Theta_i $ scalar functions, and there are two cases: 
 	\begin{itemize}
 		\item[(2.1)] $\Theta_1=0_{1\to 1}$, $\Theta_2$ inner.
 		
 		\item[(2.2)] $\Theta_2=0_{1\to 1}$, $\Theta_1$ inner.
 	\end{itemize}
 	
 	\item $\dim\FF=2$. Then $\Theta_1=\begin{pmatrix} \theta_{11}\\\theta_{12}
 	\end{pmatrix}$, 
 	$\Theta_2=\begin{pmatrix} \theta_{21}&\theta_{22}
 	\end{pmatrix}$, where 
 	\begin{equation}\label{eq:theta 11 and all}
 		|\theta_{11}|^2+|\theta_{12}|^2=1,
 	\quad
 	|\theta_{21}|^2+|\theta_{22}|^2=1,
 	\quad
 	\theta_{11}\theta_{21}+\theta_{12}\theta_{22}=0.
 	\end{equation}

 \end{enumerate}	
 \end{lemma}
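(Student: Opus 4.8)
The plan is to turn regularity of a factorization $0_{1\to1}=\Theta_2\Theta_1$ into a pointwise spectral condition on the circle, and then enumerate its solutions; the splitting by $\dim\FF$ will fall out of that condition. Throughout we have $\EE=\EE_*=\bbC$ and $\Theta=0$, so $\Delta=I$ and $\overline{\Delta L^2(\EE)}=L^2$, and the map $Z$ of~\eqref{eq:Z} is multiplication by the measurable field $t\mapsto Z(e^{it})$, where $Z(e^{it})1=\Delta_2(e^{it})\Theta_1(e^{it})1\oplus\Delta_1(e^{it})1\in\FF\oplus\bbC$.

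First I would compute $\|Z(e^{it})1\|^2$. From $\Delta_2^2=I-\Theta_2^*\Theta_2$ and $\Theta_2\Theta_1=0$ one gets $\|\Delta_2(e^{it})\Theta_1(e^{it})1\|^2=\|\Theta_1(e^{it})1\|^2$, and $\Delta_1(e^{it})^2=1-\|\Theta_1(e^{it})1\|^2$, so $\|Z(e^{it})1\|=1$ a.e.\ (as it must be, $Z$ being an isometry). Hence at a.e.\ $t$ the operator $Z(e^{it})$ embeds $\bbC$ isometrically as a one-dimensional subspace of the fibre $\operatorname{ran}\Delta_2(e^{it})\oplus\operatorname{ran}\Delta_1(e^{it})$ of the target space of $Z$. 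Since $Z$ is an isometry acting fibrewise between such spaces of measurable sections, it is onto — i.e.\ the factorization is regular — exactly when these fibres are one-dimensional a.e., that is
\[
\rank\Delta_1(e^{it})+\rank\Delta_2(e^{it})=1\qquad\text{for a.e.\ }t .
\]

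Next I would read off the two ranks. On the one-dimensional space $\bbC$ one has $\Theta_1(e^{it})^*\Theta_1(e^{it})=\|\Theta_1(e^{it})1\|^2$, so $\rank\Delta_1(e^{it})\in\{0,1\}$, equal to $1$ precisely when $\|\Theta_1(e^{it})1\|<1$. On $\FF$, $\Theta_2(e^{it})^*\Theta_2(e^{it})$ is a positive operator of rank $\le1$ whose only possibly nonzero eigenvalue is $\|\Theta_2(e^{it})\|^2$, so $\rank\Delta_2(e^{it})$ equals $\dim\FF-1$ when $\|\Theta_2(e^{it})\|=1$ and $\dim\FF$ otherwise; in particular $\rank\Delta_2(e^{it})\ge\dim\FF-1$, and the displayed identity forces $\dim\FF\le2$. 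The three cases are then dispatched in turn. If $\dim\FF=0$ there is no choice at all: $\Theta_1=0_{1\to0}$, $\Theta_2=0_{0\to1}$, $\Delta_1=I$, $\Delta_2=0$, the identity holds, and this is case (1). If $\dim\FF=1$ then $\Theta_1,\Theta_2$ are scalar contractive analytic functions with $\Theta_2\Theta_1\equiv0$; a nonzero bounded analytic function has isolated zeros, so one factor vanishes identically — not both, since then the rank sum would be $2$ — and if, say, $\Theta_1\equiv0$, the identity reduces to $\rank\Delta_2\equiv0$, i.e.\ $|\Theta_2|=1$ a.e., so $\Theta_2$ is inner (case (2.1)), and symmetrically case (2.2). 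If $\dim\FF=2$, write $\Theta_1=\begin{pmatrix}\theta_{11}\\\theta_{12}\end{pmatrix}$, $\Theta_2=\begin{pmatrix}\theta_{21}&\theta_{22}\end{pmatrix}$; then $\Theta_2\Theta_1\equiv0$ is the third relation in~\eqref{eq:theta 11 and all}, while the rank identity forces $\rank\Delta_2(e^{it})=1$ and $\rank\Delta_1(e^{it})=0$ a.e., i.e.\ $|\theta_{21}|^2+|\theta_{22}|^2=1$ and $|\theta_{11}|^2+|\theta_{12}|^2=1$ a.e.; conversely these three relations make $\Theta_1,\Theta_2$ contractive (by a maximum-principle/subharmonicity argument for the first two) and the factorization regular, which is case (3). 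Equivalently, the whole statement is the specialization of~\cite[Proposition VII.3.5]{NF} to $\Theta=0$; the argument above just makes that specialization explicit.

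I expect the only delicate point to be the fibrewise claim in the second paragraph — that an $L^2$-isometry given by multiplication by a measurable field of isometries $\bbC\to(\text{fibre})$ is surjective precisely when the fibres are one-dimensional almost everywhere. This is a routine direct-integral / measurable-selection fact, but it is where a little care is needed; the remainder is bookkeeping with ranks of scalar operators and of rank-$\le1$ perturbations of the identity.
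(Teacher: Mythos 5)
Your argument is correct, but it is a genuinely different route from the paper's: the paper offers no self-contained proof of this lemma at all, disposing of it in one sentence by citing \cite[Proposition VII.3.5]{NF} (the classification of regular factorizations of a scalar contractive analytic function) and specializing to the null function. You instead derive the classification directly: you reduce regularity of $0_{1\to1}=\Theta_2\Theta_1$ to the pointwise identity $\rank\Delta_1(e^{it})+\rank\Delta_2(e^{it})=1$ a.e., observe that $\rank\Delta_2(e^{it})\ge\dim\FF-1$ forces $\dim\FF\le 2$, and then read off the three cases; the computation $\|\Delta_2\Theta_1 f\|^2+\|\Delta_1 f\|^2=\|f\|^2$ using $\Theta_2\Theta_1=0$ and the rank bookkeeping are all correct, as is the use of the identity theorem in case $\dim\FF=1$ and the maximum-principle remark for the converse in case $\dim\FF=2$. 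What your approach buys is transparency and self-containedness -- the reader sees exactly why only these factorizations are regular -- at the cost of the one genuinely delicate step you correctly flag: identifying $\overline{\Delta_j L^2}$ with the direct integral of the fibres $\overline{\operatorname{ran}\Delta_j(e^{it})}$ and deducing that the isometry $Z$, acting fibrewise, is surjective precisely when the fibre dimensions match a.e. For finite-dimensional $\FF$ this is a standard measurable-selection fact (and it is essentially the content of the criterion behind \cite[Proposition VII.3.5]{NF}), but in a written version it should either be proved or explicitly referred back to \cite{NF}, e.g.\ to the discussion of regular factorizations in Chapter VII; with that reference in place your proof is complete and arguably more informative than the paper's citation.
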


\begin{remark}\label{re:tilde Theta}
	There is an alternate way to state conditions~\eqref{eq:theta 11 and all}: they say precisely that the $ 2\times 2 $ matrix
	\begin{equation}\label{eq:tildetheta1}
	\bm{\Theta}(e^{it}):=
	\begin{pmatrix}
	\theta_{11}(e^{it})& \bar\theta_{21}(e^{it})\\
	\theta_{12}(e^{it})& \bar\theta_{22}(e^{it})
	\end{pmatrix}
	\end{equation}
	is unitary for almost all~$ t $. Moreover, if  $\Theta'_1=\begin{pmatrix} \theta'_{11}\\\theta'_{12}
	\end{pmatrix}$, 
	$\Theta'_2=\begin{pmatrix} \theta'_{21}&\theta'_{22}
	\end{pmatrix}$ also satisfy~\eqref{eq:theta 11 and all}, then 
	$ \Theta_1'=\Omega \Theta_1 $, $ \Theta_2'=\Theta_2\Omega^* $ if and only if $ \bm{\Theta}'= \Omega \bm{\Theta} $.
\end{remark}

 \section{The invariant subspaces}\label{se:invariant subspaces}

 We may now use the information provided by Corollary~\ref{co:general} and Lemma~\ref{le:description of factorizations} in order to obtain the desired description of invariant subspaces. The next theorem is the main result of the paper.

 \begin{theorem}\label{th:main result}
 	The invariant subspaces of $ S\oplus S_* $ acting on $ H^2\oplus H^2_- $ are the following:
 	
 	\begin{itemize}
 		\item[\rm (I)] Splitting invariant subspaces; that is,
 		\[
 		Y=X\oplus X' 
 		\] 
 		with $ X\subset H^2 $ is invariant to $ S $,  $ X'\subset H^2_- $ is invariant to $S_*$.
 		
 		\item[\rm (II)] Nonsplitting invariant subspaces. These are of the form
 		\begin{equation}\label{eq:nonsplitting subspaces}
 			Y=
 		\{ (\theta_{21} u_1+\theta_{22} u_2)\oplus P_- (\bar\theta_{11} u_1+\bar\theta_{12} u_2) : u_1, u_2\in H^2\},
 		\end{equation}
 		where $ \theta_{ij} $ are  functions in the unit ball of $ H^\infty $, such that $ \theta_{11} $ and $ \theta_{12} $ are not proportional, and the matrix  
 		\begin{equation}\label{eq:tilde Theta}
 		\bm{\Theta}(e^{it}):=
 		\begin{pmatrix}
 		\theta_{11}(e^{it})& \bar\theta_{21}(e^{it})\\
 		\theta_{12}(e^{it})& \bar\theta_{22}(e^{it})
 		\end{pmatrix}
 		\end{equation}
 		is unitary almost everywhere (equivalently, $ \theta_{ij} $ satisfy~\eqref{eq:theta 11 and all}).
 		
 		Two matrices $ \bm{\Theta}, \bm{\Theta}' $ define the same invariant subspace if and only if there exists a unitary $ 2\times 2 $ matrix $ \Omega $ with scalar entries, such that $ \bm{\Theta}=\Omega \bm{\Theta}' $.
 	\end{itemize}
 	
 \end{theorem}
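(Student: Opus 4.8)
The plan is to deduce the theorem directly from Corollary~\ref{co:general} by running through the three types of regular factorizations of $0_{1\to 1}$ listed in Lemma~\ref{le:description of factorizations}, and in each case computing explicitly the subspace $Y$ given by formula~\eqref{eq:invariant subspace}. First I would dispose of the cases $\dim\FF=0$ and $\dim\FF=1$. When $\dim\FF=0$ the formula produces $Y=\{0\}\oplus H^2_-$, a splitting subspace. When $\dim\FF=1$, in case (2.1) ($\Theta_1=0$, $\Theta_2=\theta$ inner) one has $\Delta_1=1$, $\Delta_2=0$, so $Z v = 0\oplus v$ and $Z^{-1}$ acts trivially on the second summand; substituting gives $Y=\theta H^2\oplus H^2_-$, again splitting. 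In case (2.2) ($\Theta_1=\theta$ inner, $\Theta_2=0$) one has $\Delta_1=0$, $\Delta_2=1$, the term $\Theta_2 u$ vanishes, and $Z^{-1}(\Delta_2 u\oplus 0)=Z^{-1}(u\oplus 0)$; one checks $Z v=\bar\theta v\oplus 0$ restricted appropriately, so that $Z^{-1}$ is multiplication by $\theta$ on $H^2$, and hence $Y=\{0\}\oplus P_-(\bar\theta H^2)$, which by Lemma~\ref{le:formula for projection on H2-} equals $\{0\}\oplus \bar z\overline{K_\theta}$, again splitting. Together with the observation that arbitrary sums $\theta H^2\oplus \bar z\overline{K_{\theta'}}$ (and the degenerate variants) are visibly invariant, this shows that types $\dim\FF\le 1$ yield exactly the splitting subspaces of Lemma~\ref{le:splitting subspaces}, giving part~(I).

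For part~(II) I would take $\dim\FF=2$, so $\Theta_1=\begin{pmatrix}\theta_{11}\\\theta_{12}\end{pmatrix}$, $\Theta_2=\begin{pmatrix}\theta_{21}&\theta_{22}\end{pmatrix}$ with the relations~\eqref{eq:theta 11 and all}. The key computation is to identify $Z$ and $Z^{-1}$ explicitly. From~\eqref{eq:theta 11 and all} one gets $\Delta_1(e^{it})^2=I-\Theta_1^*\Theta_1=I-(|\theta_{11}|^2+|\theta_{12}|^2)=0$, so $\Delta_1=0$ and the variable $v$ drops out of~\eqref{eq:invariant subspace} entirely. Meanwhile $\Delta_2(e^{it})^2 = I_{\bbC^2}-\Theta_1\Theta_1^* $ is the rank-one projection onto $(\ker\Theta_1^*)^\perp$... more usefully, $\overline{\Delta_2 L^2(\FF)}$ is the range of multiplication by the projection $I-\Theta_1\Theta_1^*$, and the defining isometry $Z$ sends $\Delta_2 f\oplus \Delta_1 f$; since $\Delta_1=0$, regularity forces $Z$ to be the identity-like embedding onto $\overline{\Delta_2 L^2(\FF)}$, so $Z^{-1}(\Delta_2 u\oplus 0)=\Delta_2^2 u = (I-\Theta_1\Theta_1^*)u$. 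Writing this out in coordinates, with $\Theta_2 u = \theta_{21}u_1+\theta_{22}u_2$ in the first slot and the second slot the $H^2_-$-projection of $(I-\Theta_1\Theta_1^*)\binom{u_1}{u_2}$, and then using the unitarity relations~\eqref{eq:theta 11 and all} to simplify $(I-\Theta_1\Theta_1^*)u$ — in particular the off-diagonal entry $-\theta_{11}\bar\theta_{12}$ and the identity $\overline{\theta_{21}}=\text{(appropriate combination)}$ coming from the columns of $\bm\Theta$ being orthonormal — collapses the second component to $P_-(\bar\theta_{11}u_1+\bar\theta_{12}u_2)$, which is exactly~\eqref{eq:nonsplitting subspaces}. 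The hypothesis that $\theta_{11},\theta_{12}$ are not proportional is precisely the statement, via Lemma~\ref{le:description of factorizations}, that $\Theta_1$ is genuinely $\FF$-valued with $\dim\FF=2$ and cannot be reduced to a scalar (equivalently, is pure of the right size); I would note that if they \emph{were} proportional the factorization degenerates into one of the $\dim\FF\le 1$ types, landing back in~(I), and conversely that the subspaces in~(II) are never splitting.

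The uniqueness clause follows from Corollary~\ref{co:general}(iii) combined with Remark~\ref{re:tilde Theta}: two factorizations give the same $Y$ iff $\Theta_1'=\Omega\Theta_1$, $\Theta_2'=\Theta_2\Omega^*$ for a unitary $\Omega\in\LL(\FF,\FF')$, and Remark~\ref{re:tilde Theta} translates this exactly into $\bm\Theta'=\Omega\bm\Theta$ with $\Omega$ a scalar $2\times2$ unitary. I would also remark that every $\bm\Theta$ unitary a.e. with non-proportional first column does arise from an \emph{honest} regular factorization — regularity in the $\dim\FF=2$ scalar case is automatic by~\cite[Proposition VII.3.5]{NF}, already used to derive Lemma~\ref{le:description of factorizations} — so the correspondence is onto as claimed. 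The main obstacle I anticipate is the bookkeeping in identifying $Z^{-1}$ and pushing the unitarity relations~\eqref{eq:theta 11 and all} through the coordinate computation cleanly; once $\Delta_1=0$ is noted everything else is forced, but getting the second component to land on $P_-(\bar\theta_{11}u_1+\bar\theta_{12}u_2)$ rather than some unsimplified expression requires using all three relations in~\eqref{eq:theta 11 and all} at once.
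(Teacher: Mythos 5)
Your overall strategy is the paper's: run through the factorizations of Lemma~\ref{le:description of factorizations} and compute $Y$ from~\eqref{eq:invariant subspace} in each case. But the central computation in the case $\dim\FF=2$ is wrong as written. From~\eqref{eq:theta 11 and all} one has $\Delta_2^2=I-\Theta_2^*\Theta_2=\Theta_1\Theta_1^*$ (using $\Theta_1\Theta_1^*+\Theta_2^*\Theta_2=I$, which is the unitarity of $\bm\Theta$), \emph{not} $I-\Theta_1\Theta_1^*$ as you state; and $Z$ is not an ``identity-like embedding'' but multiplication by $\Theta_1$, since $Zv=\Delta_2\Theta_1 v\oplus\Delta_1 v=\Theta_1 v\oplus 0$ (here one needs $\Delta_2\Theta_1=\Theta_1$, which follows from $\Delta_2^2=\Theta_1\Theta_1^*$ and $\Theta_1^*\Theta_1=1$). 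Consequently $Z^{-1}(\Delta_2 u\oplus 0)=Z^{-1}(\Theta_1\Theta_1^*u\oplus 0)=\Theta_1^*u=\bar\theta_{11}u_1+\bar\theta_{12}u_2$, a \emph{scalar} function, as it must be since $Z^{-1}$ lands in $\overline{\Delta L^2}=L^2$. Your formula $Z^{-1}(\Delta_2 u\oplus 0)=\Delta_2^2u=(I-\Theta_1\Theta_1^*)u$ is $\bbC^2$-valued and does not typecheck; the subsequent ``collapse'' to $P_-(\bar\theta_{11}u_1+\bar\theta_{12}u_2)$ is asserted rather than derived. You reach the right formula, but the step you yourself flag as the main obstacle is exactly the one that is broken.

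Two further gaps concern the splitting/nonsplitting dichotomy. First, the factorizations with $\dim\FF\le 1$ do \emph{not} yield all splitting subspaces: they give only $\{0\}\oplus H^2_-$, $\theta H^2\oplus H^2_-$ and $\{0\}\oplus\bar z\overline{K_\theta}$. The splitting subspaces $\theta'H^2\oplus\bar z\overline{K_{\tilde\theta}}$ with both components proper arise from genuine $\dim\FF=2$ factorizations in which $\theta_{11},\theta_{12}$ \emph{are} proportional (write $\theta_{11}=\alpha_1\theta$, $\theta_{12}=\alpha_2\theta$ and left-multiply $\bm\Theta$ by a constant unitary to diagonalize it); the factorization does not ``degenerate into one of the $\dim\FF\le1$ types.'' Since Corollary~\ref{co:general}(ii) says every invariant subspace comes from some factorization, this proportional subcase must be worked out explicitly to get the full list in~(I). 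Second, you assert without proof that the subspaces in~(II) are never splitting. The argument needed is exactly the uniqueness clause: if a type~(II) subspace split, then by Corollary~\ref{co:general}(iii) and Remark~\ref{re:tilde Theta} its matrix $\bm\Theta$ would equal $\Omega'\bm\Theta''$ with $\Omega'$ a constant unitary and $\bm\Theta''$ diagonal of the form $\mathrm{diag}(\theta,\bar\theta')$, forcing $\theta_{11},\theta_{12}$ to be proportional --- a contradiction. Without this step the dichotomy between (I) and (II), and hence the classification itself, is not established.
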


 \begin{proof}

 	We take one by one the possibilities displayed in the statement of Lemma~\ref{le:description of factorizations}.
 	
 \subsection*{Case (1)}	  We have $\dim \FF=0$, so $ \Delta_2=0 $, and $ \Delta_1 = I_{L^2}$, so 
 	$Z:L^2\to \{0\}\oplus L^2$, $Z(v)=0\oplus v$.
 	\[
 	Y=\{0\oplus v: v\in L^2\}\ominus (\{0\}\oplus H^2)=\{0\}\oplus H^2_-.
 	\]
 	The invariant subspace $Y$ is the second component (the space on which acts $S^*$); it is obviously splitting.

 \subsection*{Case (2.1)} Here $\dim\FF=1$, $\Delta_1=I_{L^2}$, $\Delta_2=0$.
 $Z$ is the same operator as in the previous case. We have
 \[
 Y=\{\Theta_2 u\oplus v: u\in H^2, v\in L^2\}\ominus (\{0\}\oplus H^2)=\Theta_2 H^2\oplus H^2_-.
 \]

 \subsection*{Case (2.2)}
 Again  $\dim\FF=1$, but $\Delta_1=0$, $\Delta_2=I_{L^2}$.
 So $Z:L^2\to L^2\oplus \{0\}$, $Zv=\Theta_1 v\oplus 0$, $ Z^{-1}(w\oplus 0)=\bar{\Theta_1}w $. Then
 \[
 Y=\{0\oplus \bar\Theta_1 v: v\in H^2\} \ominus (\{0\}\oplus H^2).
 \]
 Since the projection of $ \bar{\Theta}_1 H^2 $ onto $ H^2_- $ is $ \bar \Theta_1 K_{\Theta_1} $, whence
 \[
Y =\{0\}\oplus \bar \Theta_1 K_{\Theta_1}. 
 \]
 
 One sees that both cases (2.1) and (2.2) lead to splitting invariant subspaces.

 \subsection*{Case (3)}
Here we have $ \dim\FF=2 $. From~\eqref{eq:theta 11 and all} it follows that  $\Theta_1^* \Theta_1=  \Theta_2 \Theta_2^*=I_{\bbC^2} $ a.e., so $ \Delta_1=0 $, while $ \Delta_2 $ is a projection a.e.; that is, $ \Delta_2=\Delta_2^2 $. Also, $ \bm{\Theta} $ unitary a.e. implies that $ \bm{\Theta}\bm{\Theta}^*=I_{\bbC^2} $ a.e, which is equivalent to
    \[
    \Theta_1\Theta_1^*+\Theta_2^*\Theta_2=I.
    \]
       Therefore
     \[
     \Delta_2\Theta_1= \Delta_2^2\Theta_1 = (I-\Theta_2^*\Theta_2)\Theta_1=\Theta_1\Theta_1^*\Theta_1=\Theta_1.
     \]     
     It follows that $ Z:L^2\to \Delta_2 L^2\subset L^2(\bbC^2) $ is defined by $ Zw=\Theta_1 w $, and $ Z^{-1}(\Delta_2 u) =Z^{-1}(\Delta_2^2 u)=Z^{-1}(\Theta_1\Theta_1^* u)=\Theta_1^*u $.

     Denoting   $ u\in H^2(\bbC^2) $ by $ u=\begin{pmatrix} u_1\\ u_2
     \end{pmatrix} $, we have, according to~\eqref{eq:invariant subspace},
     \begin{equation}\label{eq:case 3}
     Y=
     \{ (\theta_{21} u_1+\theta_{22} u_2)\oplus P_- (\bar\theta_{11} u_1+\bar\theta_{12} u_2) : u\in H^2(\bbC^2)\}.
   \end{equation}

    We have again to discuss two cases.  
    \subsection*{Case (3.1)}
    Suppose $ \theta_{11}, \theta_{12} $ are proportional; then from~\eqref{eq:theta 11 and all}  it follows that $\theta_{11}=\alpha_1 \theta $, $ \theta_{12}=\alpha_2 \theta $ for some inner function $ \theta $ and complex numbers $ \alpha_i $ with $ |\alpha_1|^2+|\alpha_2|^2=1 $. Then again by~\eqref{eq:theta 11 and all} we have $ \alpha_1\theta_{21}+\alpha_2\theta_{22}=0 $. Therefore $\bm{\Theta}  $ defined by~\eqref{eq:tilde Theta} is
    \[
    \bm{\Theta}=\begin{pmatrix}
    \alpha_1\theta& \bar{\theta}_{21}\\
    \alpha_2\theta& \bar \theta_{22}
    \end{pmatrix}.
    \]
    The $ 2\times 2 $ matrix $ \Omega=\begin{pmatrix}
    \bar{\alpha}_1&\bar{\alpha}_2\\ 
    \alpha_2&-\alpha_1
    \end{pmatrix} $
    is unitary, and 
    \[
    \Omega\bm{\Theta}=
    \begin{pmatrix}
    \theta & 0\\0 &\alpha_2\bar\theta_{21}-\alpha_1\bar\theta_{22}
    \end{pmatrix}.
    \]
    Since $ \Omega\bm{\Theta} $ must be unitary almost everywhere, it follows that $ \theta':= \bar \alpha_2\theta_{21}-\bar\alpha_1\theta_{22}$ is inner.
   By Corollary~\ref{co:general} (iii) and Remark~\ref{re:tilde Theta}, the invariant subspace obtained in~\eqref{eq:case 3} can also be defined by  $\bm{\Theta}'= \Omega\bm{\Theta} $, and so
    \begin{equation}\label{eq:Yfor case 3.1}
      Y=\{\theta'u_2\oplus\bar{\theta}u_1: u_1, u_2\in H^2 \} = 
    \theta'H^2\oplus \bar{\theta} K_{\theta}.
    \end{equation}
    
    Note that this case completes the list of splitting invariant subspaces described in Lemma~\ref{le:splitting subspaces}.

    \subsection*{Case (3.2)}
    The last case appears when   $\theta_{11}$ and $ \theta_{12} $ are not proportional, which leads us precisely to the invariant subspaces of type (II). To finish the proof, we have to show that these do not split. We already know  the subspaces that split, so we must show that subspaces of type (II) do not coincide with any of them. By Corollary~\ref{co:general} (iii) any other factorization producing the same invariant subspace must be of type (3), with the associated invariant subspace given by~\eqref{eq:Yfor case 3.1}. So we must have two inner functions $ \theta, \theta' $ and a $ 2\times 2 $ unitary matrix $ \begin{pmatrix}
    a_{11}& a_{21}\\a_{12}& a_{22}
    \end{pmatrix} $ 
    such that
    \[
     \begin{pmatrix}
    \theta_{11}&\bar  \theta_{21}\\ \theta_{12}& \bar\theta_{22}
    \end{pmatrix} =
     \begin{pmatrix}
    a_{11}& a_{21}\\a_{12}& a_{22}
    \end{pmatrix} 
     \begin{pmatrix}
    \theta& 0\\0& \bar\theta'
    \end{pmatrix} .
    \]
    It follows that $ \theta_{11}=a_{11}\theta $, $ \theta_{12}=a_{12}\theta $. So $ \theta_{11} $ and $ \theta_{12} $ are proportional, contrary to the assumption. The proof is thus finished.
     \end{proof}

\begin{remark}\label{re:about subspaces} (i)
An alternate compact way to write the nonsplitting subspaces in~\eqref{eq:nonsplitting subspaces} can be obtained if we consider $ \bm{\Theta} $ as a multiplication operator on $ L^2\oplus L^2 $. Then 
\[
Y=(P_+\oplus P_-)\bm{\Theta}^*(H^2\oplus H^2).
\]

(ii)  The case (I) of Theorem~\ref{th:main result}, that is, the identification of all splitting subspaces, is stated in~\cite[Theorem 8.1]{CR}.
\end{remark}

\begin{remark}\label{re:which are pure}
	As noted in Corollary~\ref{co:general}, the characteristic function of $ T_Y $ is the pure part of $ \Theta_1 $. If we examine the factorizations in Lemma~\ref{le:description of factorizations}, we see that 
	  $ \Theta_1 $ is pure in cases (1) and (2.1). 
	  In case (2.2), $ \Theta_1 $ is pure if it is nonconstant.
	  In case (3), $ \Theta_1 $ is pure, except when $ \theta_{11}=t_1 $ and $ \theta_{12}=t_2 $ are constant scalars satisfying 	$|t_1|^2+|t_2|^2=1$. 
	  Consequently, in all these cases the characteristic function of $ T_Y $ is $ \Theta_1 $ and $ \dim\DD_{T_Y}=1 $.

	The remaining  cases, which lead to $ \dim\DD_{T_Y}=0 $, are then:
	\begin{itemize}
		\item (2.2), with $ \Theta_1 $ constant of modulus~1. The invariant subspace is $ \{0\} $.
		
		\item  (3), with  $ \theta_{11}=t_1 $ and $ \theta_{12}=t_2 $  constant scalars satisfying 	$|t_1|^2+|t_2|^2=1$. 
		Arguing as in the proof of~\ref{th:main result}, Case 3.1, we obtain a $ 2\times 2 $ unitary matrix $ \Omega $ and an inner function $ \theta' $ such that
		\[
		\Omega\bm{\Theta}=
		\begin{pmatrix}
		1&0\\0&\theta'
		\end{pmatrix}.
		\] 
		The invariant subspace is $ Y=\theta'H^2\oplus \{0\} $, and the characteristic function of $ T_Y $ is the pure part of $ \Omega\Theta_1=\begin{pmatrix}
		1\\0
		\end{pmatrix} $, which is $ 0_{0\to 1} $. 
		
	\end{itemize}

\end{remark}

  Using Remark~\ref{re:which are pure}, we may identify the reducing subspaces of $ S\oplus S_* $.

\begin{theorem}\label{th:reducing}
	The only reducing subspaces for $ S\oplus S_* $ are $ H^2\oplus\{0\} $ and $ \{0\}\oplus H^2_- $.
\end{theorem}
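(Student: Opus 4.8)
The plan is to use the correspondence between reducing subspaces and a particularly rigid kind of factorization. A reducing subspace $Y$ for $S\oplus S_*$ is invariant for both $S\oplus S_*$ and its adjoint $S^*\oplus S_*^*$, so both $Y$ and $Y^\perp$ are invariant subspaces, and moreover $T_Y = (S\oplus S_*)|Y$ together with $(S\oplus S_*)|Y^\perp$ reconstruct the whole operator as a direct sum. The key structural fact I would invoke is that for a reducing subspace the associated factorization $0_{1\to1}=\Theta_2\Theta_1$ must have $\Theta_1$ a constant unitary (equivalently, the characteristic function of $T_Y$ is trivial): indeed, in the Sz.-Nagy--Foias picture the characteristic function of the restriction to a reducing subspace, together with that of the restriction to its orthogonal complement, must multiply to give the characteristic function of the ambient operator, which here is $0_{1\to1}$; but $0_{1\to1}$ is already pure, so it cannot be split off as a ``left factor'' coming from a reducing complement unless the corresponding piece is unitary. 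Concretely, I would argue that $\dim\DD_{T_Y}=0$, so $Y$ falls into the exceptional list compiled in Remark~\ref{re:which are pure}.

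So the first step is: reduce to the cases in Remark~\ref{re:which are pure} that give $\dim\DD_{T_Y}=0$, namely (2.2) with $\Theta_1$ a unimodular constant, which yields $Y=\{0\}$, and Case (3) with $\theta_{11}=t_1,\theta_{12}=t_2$ constant scalars with $|t_1|^2+|t_2|^2=1$, which yields $Y=\theta' H^2\oplus\{0\}$ for some inner $\theta'$. The trivial subspace $\{0\}$ and the full space $H^2\oplus H^2_-$ are of course reducing, but these are the degenerate endpoints; the content is in the second family. The second step is then to determine, among the subspaces $Y=\theta'H^2\oplus\{0\}$, which are actually reducing. Since such a $Y$ is already of the form $X\oplus X'$ with $X'=\{0\}$, it is reducing for $S\oplus S_*$ if and only if $\theta'H^2$ is reducing for $S$ and $\{0\}$ is reducing for $S_*$ — the latter is automatic. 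But $S$ is a completely nonunitary (pure) isometry, hence has no nontrivial reducing subspaces at all; so $\theta'H^2$ reduces $S$ only if $\theta'H^2=H^2$, i.e. $\theta'$ is a unimodular constant, giving $Y=H^2\oplus\{0\}$. A symmetric argument, interchanging the roles of the two summands (or invoking the analogue of Remark~\ref{re:which are pure} applied to $Y^\perp$, which by the same reasoning forces $Y^\perp$ to be of the corresponding shape on the second coordinate), produces the companion subspace $\{0\}\oplus H^2_-$.

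Alternatively, and perhaps more cleanly, once the first step has pinned $Y$ down to the form $\theta'H^2\oplus\{0\}$ (or, for $Y^\perp$, $\{0\}\oplus\bar z\,\overline{K_{\theta''}}$ or $H^2_-$ after the symmetric bookkeeping), I would simply check reducibility directly: $\theta'H^2\oplus\{0\}$ is invariant for $S^*\oplus S_*^*$ iff $\theta'H^2$ is invariant for $S^*$, and $\theta'H^2$ is $S^*$-invariant iff $\theta'H^2=\{0\}$ or $\theta'H^2=H^2$ — the only subspaces invariant for both $S$ and $S^*$. Combining with $Y\neq\{0\}$ forces $Y=H^2\oplus\{0\}$. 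The symmetric case handled identically gives $\{0\}\oplus H^2_-$, and together with the two trivial subspaces these exhaust the reducing subspaces. I expect the main obstacle to be the first step: justifying rigorously that a reducing subspace must correspond to a factorization with $\dim\DD_{T_Y}=0$. The quickest route is to note that if $Y$ reduces $T:=S\oplus S_*$, then $T=T_Y\oplus T|Y^\perp$, and characteristic functions are multiplicative under such direct sums in the sense that $\Theta_T$ is unitarily equivalent to $\Theta_{T_Y}\oplus\Theta_{T|Y^\perp}$; since $\Theta_T=0_{1\to1}$ is a pure scalar function with one-dimensional defect spaces, one of the two direct summands must act between zero-dimensional spaces, i.e. the corresponding restriction has trivial defect — and by symmetry we may as well take it to be $T_Y$, landing us in the exceptional list of Remark~\ref{re:which are pure}. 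This is the only place real Sz.-Nagy--Foias input beyond what is already set up is needed; everything after it is the elementary observation that a pure isometry has no nontrivial reducing subspaces.
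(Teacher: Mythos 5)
Your proposal is correct and follows essentially the same route as the paper: both arguments use the additivity of defect dimensions over the orthogonal decomposition $H^2\oplus H^2_-=Y\oplus Y^\perp$ to force one summand to have zero defect, invoke Remark~\ref{re:which are pure} to identify that summand as $\{0\}$ or $\theta'H^2\oplus\{0\}$, and then check directly that only $\theta'\equiv 1$ survives (the paper tests invariance of the complement $K_{\theta'}\oplus H^2_-$, you test $S^*$-invariance of $\theta'H^2$ --- the same check in two equivalent forms). Just note that your opening claim that a reducing subspace itself must have $\dim\DD_{T_Y}=0$ is literally false (e.g.\ $Y=\{0\}\oplus H^2_-$ has one-dimensional defect); it is the ``without loss of generality'' you state later that makes the argument correct.
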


\begin{proof}
	Suppose $ H^2\oplus H^2_-=Y_1+ Y_2 $, with $ Y_1\perp Y_2 $ nontrivial and both invariant with respect to $ S\oplus S_* $. Then $ 1=\dim \DD_{S\oplus S_*}= \dim\DD_{T_{Y_1}}+ \dim\DD_{T_{Y_2}} $.  
	We may assume that $ \dim\DD_{T_{Y_1}}=0$, $\dim\DD_{T_{Y_2}}=1 $. It follows from Remark~\ref{re:which are pure} that either $	Y_1=\{0\} $ or $Y_1=\theta_1 H^2\oplus \{0\} $ with $ \theta_1 $ inner. The nontrivial case is the latter; then $ Y_2=K_{\theta_1}\oplus H^2_- $ is invariant only if $ \theta_1\equiv 1 $. 
\end{proof}

    \section{An example}\label{se:example}
 
 \newcommand{\Beta}{(1-|\alpha|^2)^{1/2}} 
 
 The following example exhibits a whole class of nonsplitting invariant subspaces, for which we may obtain a simpler form than that given by Theorem~\ref{th:main result}.
 
 \begin{example}
 	Fix $ \alpha,    a\in \bbD $, and $ \alpha\not=0 $.
 	Denote $ b_a(z)=\frac{z-a}{1-\bar a z} $, and define
 	the functions $ \theta_{ij} $, $ i,j=1,2 $,  by
 	\[
 	\theta_{11}(z)=\theta_{22}(z)=\alpha b_a(z),\quad
 	\theta_{12}(z)=-\Beta, \quad	\theta_{21}(z)= (1-|\alpha|^2)^{1/2}.
 	\]
 	By Theorem~\ref{th:main result}, we obtain  the following nonsplitting subspace:
 	\[
 	Y=\{ (\Beta u_1+\alpha b_a u_2)\oplus P_-(\overline{\alpha b_a}u_1-\Beta u_2): u_1, u_2\in H^2  \}.
 	\]
 	There is a simpler way to write this subspace. First, $ P_- u_2=0 $. Secondly, $ K_{b_a} $ is a one dimensional space generated by the reproducing kernel $ k_a(z)=\frac{1}{1-\bar a z} $, and the orthogonal projection onto $ K_{b_a} $ has the formula $P_{K_{b_a} }f=(1-|a|^2)f(a)k_a $. We have the orthogonal decomposition $ H^2=K_{b_a}\oplus b_aH^2 $, according to which    $ u_1=(1-|a|^2)u_1(a)k_a+b_a u_1'  $. Therefore 
 	\[
 	\Beta u_1+\alpha b_a u_2=\Beta(1-|a|^2)u_1(a)k_a+ b_a(\Beta u_1'+\alpha u_2)
 	\]
 	and 
 	\[
 	\begin{split}
 	P_-(\overline{\alpha b_a}u_1-\Beta u_2)&=P_-(\overline{\alpha b_a}(1-|a|^2)u_1(a)k_a+\bar\alpha u_1'-\Beta u_2)\\&=P_-(\overline{\alpha b_a}(1-|a|^2)u_1(a)k_a)
 	= \frac{\bar\alpha(1-|a|^2)u_1(a) \bar z}{1-a\bar z}.
 	\end{split}
 	\]	
 	If we denote $ u=\Beta u_1+\alpha b_a u_2 $, then $ u(a)=\Beta u_1(a) $; moreover, if $ u_1, u_2 $ are arbitrary functions in $ H^2 $, then $ u $ is also an arbitrary function in $ H^2 $. We may therefore write
 	\begin{equation*} 
 	Y=\{ u\oplus  \frac{\bar \alpha(1-|a|^2)u(a) \bar z}{\Beta (1-a\bar z)} : u\in H^2 \}.
 	\end{equation*}
 	It is easy to see that when $ \alpha\in\bbD\setminus\{0\} $, $ \frac{\bar \alpha(1-|a|^2)}{\Beta} $ covers $ \bbC\setminus\{0\} $. Let us denote $ \beta=  \frac{\bar \alpha(1-|a|^2)}{\Beta} $; the invariant subspace is then 
 	\begin{equation}\label{eq:example}
 	Y=\{ u\oplus\beta \frac{ u(a) \bar z}{1-a\bar z} : u\in H^2 \}.
 	\end{equation}

 	We have thus obtained in~\eqref{eq:example} 
 	a class of nonsplitting invariant subspaces parameterized by the nonzero complex number $ \beta $. For an appropriate value of this parameter,
 	$ Y $ corresponds to the subspace appearing in Example 7.3 of~\cite{CR} (after taking   into account the unitary equivalence implemented by $ J:H^2\to H^2_- $).
 	
 \end{example}

     \section{Parametrization of nonsplitting subspaces}\label{se:parametrization}
     
     The nonsplitting subspaces are the most interesting ones, so   it is worth to obtain a more detailed description of this class.
     Equations~\ref{eq:theta 11 and all} define $ \Theta_1 $ and $ \Theta_2 $ in an implicit manner; we will determine in this last section a parametrization of these two functions.
     
      We start with a pair of nonproportional functions $\theta_{11}, \theta_{12}\in H^\infty$ that satisfy $   |\theta_{11}|^2+|\theta_{12}|^2=1$.
     First, if we denote by $g_1, g_2$ the outer parts of $ \theta_{11}, \theta_{12} $ respectively, they   satisfy $|g_1|^2+|g_2|^2=1$. In fact, this means an  outer function $ g_1 $ bounded by 1 and  subject to the condition $ \int (1-|g_1|^2) >-\infty $, which is equivalent to $ g_1 $ not being an extreme point of the unit ball of $ H^\infty $ (see~\cite{LR}). Then $ g_1 $ determines   $ g_2 $ up to a scalar of modulus~1.
     
     Since $ \bm{\Theta} $ is unitary, $ |\theta_{11}(e^{it})|=|\theta_{22}(e^{it})| $ and $|\theta_{12}(e^{it})|=|\theta_{21}(e^{it})| $ almost everywhere. Therefore $g_1, g_2$ are also the outer parts of $ \theta_{22}, \theta_{21} $ respectively. We may then write
     \[
     \theta_{11}=\alpha_{11}g_1,\quad \theta_{12}=\alpha_{12}g_2, \quad
     \theta_{21}=\alpha_{21}g_2, \quad
     \theta_{22}=\alpha_{22}g_1,
     \]
     with $ \alpha_{ij} $ inner; from the last formula in~\eqref{eq:theta 11 and all} it follows that
     \[
     	\alpha_{11}\alpha_{21}+\alpha_{12}\alpha_{22}=0.
     \]
     By factoring common inner divisors, let us then write $ \alpha_{ij}=\alpha_i\beta_{ij} $, with $ (\beta_{i1}, \beta_{i2})=1 $. It follows then that
     \[
     	\beta_{11}\beta_{21}=-\beta_{12}\beta_{22}.
     \]
     Divisibility implies then that $ \beta_{22}=\lambda \beta_{11} $ and $ \beta_{21}=-\lambda \beta_{12} $ for some $ \lambda\in\bbC $, $ |\lambda|=1 $. If we denote, for simplicity, $ \beta_1=\beta_{11} $ and $ \beta_2=\beta_{12} $,
     we may write 
     \[
     Y=\{ (\lambda\alpha_2 [-\beta_2g_2 u_1+\beta_1g_1 u_2])
     \oplus P_-(\bar\alpha_1[\bar\beta_1 \bar g_1 u_1+\bar\beta_2\bar g_2 u_2]) \}.
     \]
     
     
     So the nonsplitting invariant subspace  $Y$ is determined by the following ``free'' objects:
     \begin{itemize}
     	\item[(i)]  
     	An  outer function $ g_1 $ bounded by 1 that is not  an extreme point of the unit ball of $ H^\infty $.

     	\item[(ii)] Two arbitrary inner functions $\alpha_1, \alpha_2$.
     	
     	\item[(iii)] Two arbitrary, but coprime inner functions $\beta_1, \beta_2$.
     	
     	\item[(iv)] A complex number $ \lambda $ of modulus 1.
     	
     \end{itemize}
     To obtain from these parameters $ \theta_{ij} $, note first that $ g_1 $ determines up to a constant of modulus 1 an outer function $ g_2 $, such that $ |g_1|^2+|g_2|^2=1 $. Then we have
     \begin{equation}\label{eq:parametrization}
         \theta_{11}=\alpha_{1}\beta_1 g_1,\quad \theta_{12}=\alpha_{1}\beta_2 g_2, \quad
     \theta_{21}=-\lambda\alpha_{2}\beta_2g_2, \quad
     \theta_{22}=\lambda\alpha_{2}\beta_1g_1.
     \end{equation}
     
The condition for two parametrizations to produce the same invariant subspace follows from the last statement of Theorem~\ref{th:main result}.   One sees that there is a remarkable richness of nonsplitting subspaces.

\end{document}